\theoremstyle{plain}
\newtheorem{theorem}{Theorem}[section]
\newtheorem{lemma}[theorem]{Lemma}
\theoremstyle{definition} \newtheorem{definition}[theorem]{Definition}
\theoremstyle{remark} \newtheorem{remark}[theorem]{Remark}
\def\cB{\mathcal B}
\def\cF{\mathcal F}
 \def\cR{{\mathcal R}} \def\cC{\mathcal C}
 \def\cB{\mathcal B}
\newcommand{\cW}{\mathcal W} \newcommand{\cM}{\mathcal M}
  \def\cF{\mathcal F}
\newcommand{\tr}{\mathrm{Tr}} \newcommand{\PG}{\mathrm{PG}}
\newcommand{\AG}{\mathrm{AG}}
\newcommand{\cV}{\mathcal V}
\newcommand{\GF}[1]{\mathbb{F}_{#1}}
\begin{document}
	
	\title[Mutually intersecting varieties]{On mutually $\mu$-intersecting quasi-Hermitian varieties with some applications}

\author[1]{\fnm{Angela} \sur{Aguglia}}\email{angela.aguglia@poliba.it}
\author[2]{\fnm{Luca} \sur{Giuzzi}}\email{luca.giuzzi@unibs.it}
\author[1]{\fnm{Viola} \sur{Siconolfi}}\email{viola.siconolfi@poliba.it}

\affil[1]{\orgdiv{Dipartimento di Meccanica, Matematica e Management}, \orgname{Politecnico di Bari}, \orgaddress{\street{Via Orabona,4}, \city{Bari}, \postcode{70126}, \state{Italy}, \country{IT}}}

\affil[2]{\orgdiv{DICATAM}, \orgname{Universit\`a di Brescia},
\orgaddress{\street{Via Branze, 43}, \city{Brescia}, \postcode{25123}, \state{Italy}, \country{IT}}}

\abstract{
Let $\cW$ be a non-empty set of points of a finite Desarguesian projective space
$\PG(n,q)$.
A collection of varieties of $\PG(n,q)$  is mutually $\mu$-intersecting (relatively to $\cW$) if its elements meet all $\cW$ in the same number of points and pairwise intersect in $\cW$ in exactly $\mu$ points.

Here, we construct a new family of mutually $\mu$-intersecting algebraic varieties by using certain quasi-Hermitian varieties of $\PG(n, q^2)$, where $q$ is any prime power.
With the help of these quasi-Hermitian varieties we provide a new construction of $5$-dimensional
MDS codes over $\GF{q}$ as well as an infinite family of 
simple orthogonal arrays $OA(q^{2n-1},q^{2n-2},q,2)$ of index $\mu=q^{2n-3}$. 
}
\keywords{Quasi-polar space; Algebraic Variety; MDS code; Orthogonal Array; Collineation.}
\pacs[MSC Classification]{05B25 }

\maketitle

%
%
\section{Introduction}
Let $M$ be a set of non-negative integers, $q$ a prime power and
$\cW$ a non-empty set of points of the Desarguesian projective space $\PG(n,q)$.
We say that a set of varieties $\cV(f_1),\dots,\cV(f_s)$ in  $\PG(n,q)$ are \emph{mutually $M$-intersecting relatively to $\cW$} if
the following properties hold:
\begin{itemize}
\item $|\cV(f_i)\cap\cW|=k$ for every $i=1,\dots,s$;
\item $|\cV(f_i)\cap \cV(f_j)\cap\cW|\in M$ for $i,j=1,\ldots,s$ and $i\neq j$.
\end{itemize}
This is a slight generalization of the original definition of $M$-intersecting varieties, 
\cite{FuMi0, FuMi3}, where  $\cW$ is the whole projective
space $\PG(n,q)$.
When $\cW$ is understood from the context and  $M=\{\mu\}$ we abuse the notation and simply talk about $\mu$-intersecting varieties
(instead of the more proper form $\{\mu\}$-intersecting). Having a 
family of $\mu$-intersecting varieties is directly related
to many combinatorial constructions, e.g.
 combinatorial designs, orthogonal arrays and maximum distance separable codes.

 In~\cite{AG} the authors
 consider a family of $\mu$-intersecting Hermitian varieties to obtain a family of orthogonal arrays. A similar approach is also followed in~\cite{AGMS}, where
an orthogonal array is obtained using the $\mu$-intersection property of some affine quasi-Hermitian varieties in the affine space  $\AG(3,q^2)$ with $q$ an even prime power.
The varieties of~\cite{AGMS} are the affine points of some BM quasi-Hermitian varieties
whose definition we briefly recall here (for more details, see Section~\ref{prelim}).

A non-singular Hermitian variety $H(n,q^2)$ in  $\PG(n,q^2)$, with $q$ a prime power, is the set of absolute points of a non-degenerate unitary polarity of $\PG(n,q^2)$.  Non-singular Hermitian varieties have been extensively studied both from the combinatorial and the algebraic point of view; see~\cite{Segre,Bose}. 
Their size is $(q^{n+1}+(-1)^n)(q^n-(-1)^n)/(q^2-1)$ and they are known to be $2$-character sets with respect to their intersections with hyperplanes. Specifically, their intersection with a non-tangent hyperplane has size
$(q^n+ (-1)^{n-1})(q^{n-1}-(-1)^{n-1})/(q^2-1)$, while their intersection with a tangent hyperplane contains $ 1 +q^2(q^{n-1}+(-1)^n)(q^{n-2}-(-1)^n)/(q^2-1)$ points; see~\cite{Segre} for further details. 

The combinatorial property of having just a few intersection numbers with hyperplanes is very strong (see e.g.~\cite{CK} for $2$-character sets), but it is not enough by itself to
characterize Hermitian varieties nor, more in general, the varieties
consisting of the absolute points of some polarity.
The notion of \emph{quasi-polar space}, see~\cite{SV22}, encompasses this combinatorial
property.
A quasi-polar space in $\PG(n,q)$ is a set of points 
whose intersection numbers with hyperplanes
are the same as
a non-degenerate classical polar space embedded in $\PG(n,q)$.


By Segre's theorem~\cite{Se55}, for $q$ odd and $n=2$, a quasi-conic (i.e. a $(q+1)$--arc)
is a conic. 
However, in general, proper quasi-quadrics and quasi-Hermitian varieties do exist.

In the case of unitary polarities, a set of points of $\PG(2,q^2)$ with the same size, i.e. $q^3+1$, and the same intersection numbers $\{1,q+1\}$ with lines as a Hermitian curve is called an \emph{unital}; it is well known that all Desarguesian projective planes $\PG(2,q^2)$ with $q>2$ contain non-classical unitals, i.e. unitals which are not projectively equivalent to the Hermitian curve; see~\cite{M79} and also~\cite{EB}.

Moving to higher dimension, a \emph{quasi-Hermitian variety} of $\PG(n,q^2)$ is the combinatorial abstraction of the non-degenerate Hermitian variety  defined as a
set of points in the
finite projective space $\PG(n,q^2)$ which has  the same intersection numbers with hyperplanes as the (projectively unique) non-degenerate Hermitian variety $H(n,q^2)$ of $\PG(n,q^2)$.
This implies that a quasi-Hermitian variety for $n>2$ also has the same number of points as $H(n,q^2)$, see~\cite{SV22}.
In particular, quasi-Hermitian varieties are always $2$-character sets;
see~\cite{CK}.

Obviously,  any  Hermitian  variety $H(n,q^2)$ is also a quasi–Hermitian variety; so we  call it the \emph{classical quasi–Hermitian  variety}  of $\PG(n,q^2)$. If $n=2$, a set with the same intersection characters with lines as a Hermitian curve has size either $q^3+1$ or $q^2+q+1$ and hence it is either a unital or a Baer subplane.


In 1976, F. Buekenhout introduced a general construction for unitals
in finite translation planes of square order~\cite{B}; later, R. Metz~\cite{M79}
showed the existence of non-classical
 unitals in $\PG(2,q^2)$ for $q>2$, that is, unitals which are not projectively equivalent to Hermitian curves. These unitals are
called Buekenhout-Metz (BM) unitals.
In~\cite{ACK}
a large family of quasi-Hermitian varieties of $\PG(n,q^2)$, indexed by pairs of elements of $\GF{q^2}$, is constructed.
These quasi-Hermitian varieties, called \emph{BM quasi-Hermitian varieties}, turn out to be a sort of generalization of
BM-unitals to higher dimension 
and their classification up to linear projectivities in the three-dimensional projective space is provided in ~\cite{AG3}, for $q$ odd and in \cite{AGMS}, for $q$ even.

In the present paper, we consider a $\mu$-intersecting family of BM quasi-Hermitian varieties and present some applications to codes and orthogonal arrays.
In Section~\ref{prelim} we recall some basics on the construction of BM quasi-Hermitian varieties and
fix the notation we shall use in the rest of the paper; in particular, in subsection~\ref{s2} we
recall the construction of BM quasi-Hermitian varieties, while in subsection~\ref{codes} we mention
some basics about coding theory; subsection~\ref{orth-intro} is dedicated to orthogonal arrays.

Our main geometric result is presented in Section~\ref{oarr}, where we describe a family of BM quasi-Hermitian
varieties mutually intersecting in $q^{2n-2}$ affine points in $\AG(n,q^2)$.
In Section~\ref{RS}, for the special case in which $q>4$ and $n=3$, 
we use these varieties
to obtain a $\GF{q}$-linear code 
with $q^5$ words, length $q$ and minimum distance $q-4$
which turns out to be equivalent to a subcode of the classical Reed-Solomon code over $\GF{q}$.
Finally, in Section~\ref{OA} we apply the construction of Section~\ref{oarr} to obtain an orthogonal array with parameters
$OA(q^{2n-1},q^{2n-2},q,2)$.

\section{Preliminaries}
\label{prelim}
\subsection{On BM quasi-Hermitian varieties}
\label{s2}
Let $q$ be a prime power (either odd or even, unless explicitly mentioned).
We consider the field extension $\GF{q^2}:\GF{q}$ and define the
(relative) trace and norm as
\[
\tr(x):=x+x^q,\qquad N(x):=x^{q+1}\qquad \text{ for all }x\in\GF{q^2}.
\]
Also, put $T_0:=\{x\in\GF{q^2}| \tr(x)=0\}$.

We shall work in the projective space $\PG(n,q^2)$, a
point of $\PG(n,q^2)$ will be described by a class of vectors in
homogeneous coordinates  $[(x_0,x_1,\ldots,x_n)]$.
We fix as hyperplane at infinity, the hyperplane $\Sigma_{\infty}$ of
$\PG(n,q^2)$ with equation $x_0=0$, i.e.
 \[
 \Sigma_{\infty}:=\{(0,x_1,\ldots,x_n): x_i\in \GF{q^2}\}\subset PG(n,q^2).
 \]
Let $\AG(n,q^2)$ be the
the affine space given by $\PG(n,q^2)\setminus\Sigma_{\infty}$.
Under this assumption, the points of $\AG(n,q^2)$ correspond to
classes
$[(x_0,x_1,\ldots,x_n)]\in\PG(n,q^2), x_0\neq 0$ and can be represented
by the vector coordinates $(x_1/x_0,\dots,x_n/x_0)\in\GF{q}^n$.

We shall use interchangeably affine and projective equations,
as the case warrants. 

Take $a\in\GF{q^2}$ and $b \in\GF{q^2} \setminus \GF{q}$. Define
$\mathcal{B}_{a,b}$  as the projective variety with equation
\begin{multline}
  \label{eq:bab}
  \cB_{a,b}: X_n^qX_0^q-X_nX_0^{2q-1}+a^q(X_1^{2q}+\ldots+X_{n-1}^{2q})-a(X_1^2+\ldots+X_n^2)X_0^{2q-2}=\\
  (b^q-b)(X_1^{q+1}+\ldots+X_{n-1}^{q+1})X_0^{q-1}
\end{multline}
and $\cF\subset \Sigma_{\infty}$ as the Hermitian cone
\[
\cF:=\{(0,x_1,\ldots,x_n)|x_1^{q+1}+\ldots+x_n^{q+1}=0\}.
\]
We can now define $\cM_{a,b}$ as
\[
\cM_{a,b}:=(\cB_{a,b}\setminus\Sigma_\infty)\cup \cF.
\]
We observe that 
$\cM_{a,b}$ is not described explicitly in terms of equations but
rather in terms of the gluing of the affine part of the  variety $\cB_{a,b}$
of degree $2q$ with some points at infinity.
In any case, as any set of points in $\PG(n,q^2)$ can be regarded as a (suitable) algebraic variety, this should not engender confusion.

If $a=0$, then $\cM_{a,b}$ is the point set of a classical quasi-Hermitian variety.  In the case in which $a\neq 0$ then, from \cite{ACK}*{Section 3} and \cite{AG2}*{Section 3}  we know that
$\cM_{a,b}$  is a non-classical  quasi-Hermitian variety if either of the following conditions holds:
\begin{enumerate}[label=(QH\arabic*)]
	\item\label{qh1} $n$ and $q$ are odd  and  $4a^{q+1}+(b^q-b)^2\neq 0$;
	\item\label{qh2} $n$ is even,  $q$ is odd  and  $4a^{q+1}+(b^q-b)^2$ is a non-square in $\GF{q}$;
	\item\label{qh3} $n$ and $q$ are even and $\tr(\frac{a^{q+1}}{(b^q+b)^2})=0$;
	\item\label{qh4} $n$ is odd and $q$ is even.
\end{enumerate}
This quasi-Hermitian variety is called a \emph{BM quasi-Hermitian variety}.

\subsection{Coding theory}
\label{codes}
In this paper we adopt the book by F. MacWilliams and N.Sloane~\cite{ECC} as a standard reference for coding theory.
We recall that a code $C$ of length $n$ over $\GF{q^s}$ with $s\geq1$ and minimum (Hamming) distance
$d$ is just a set of $N$ vectors of $\GF{q^s}^n$ which pairwise differ in at least $d$ coordinates.
A code $C$ is $\GF{q}$-linear if it is a vector space over $\GF{q}$, clearly in this case $N=q^k$ where $k=\dim_{\GF{q}}(C)$.

In this paper we shall consider a code whose words are in $\GF{q^2}^n$, but which is only $\GF{q}$-linear. 
In particular, we shall adopt the following definition.
\begin{definition}
Let $C\subseteq\GF{q^2}^n$ be a code which is $\GF{q}$-linear. We say that $C$ is
\emph{equivalent} to an $\GF{q}$-linear code $C'\subseteq\GF{q^2}^n$ if there is an $\GF{q}$-linear isometric mapping 
$\varphi:C\to C'$.
\end{definition}
 By the Singleton bound for $\GF{q}$-linear codes, $N\leq q^{n-d+1}$.
 When equality holds, a code is \emph{maximum distance separable}  or MDS.
 These codes play a relevant role in the theory of error correcting codes. If a code is MDS the above equality can be read also as $d=n-k+1$.

Let $\GF{q}[t]$ be the ring of all polynomials in the indeterminate
$t$ with coefficients in $\GF{q}$. We write $\GF{q}[t]_{\deg\leq k-1}$
for the vector space of all polynomials over $\GF{q}$ in the
unknown $t$ and degree at most $k-1$. Clearly $\dim(\GF{q}[t]_{\deg\leq k-1})=k$.

A well known family of linear MDS codes is given by  Reed-Solomon codes (see \cite{RS,SR} notation: $RS_q(n,k)$), they can be defined as follows:
\begin{definition}The Reed-Solomon code over $\GF{q}$ is
    \[
RS_q(n,k)=\{(f(\omega_1),\ldots,f(\omega_n)):f\in \GF{q}[x]_{\deg\leq k-1}\}
    \]
    where $n\leq q-1$ and $\omega_1,\ldots,\omega_n$ are distinct elements of $\GF{q}^*$.
    If we take all values of $\omega_i\in\GF{q}$, then we have
    a code of length $q$ which is called an
    \emph{extended Reed-Solomon code} and denoted by
    $RS_q(q,k)$.
\end{definition}
Extended Reed-Solomon codes can be further extended by adding an 
extra symbol to each codeword (the component corresponding 
to evaluating the polynomials ``at infinity''). Codes thus
obtained are called \emph{doubly extended Reed-Solomon}.
We shall also denote them by the symbol $RS_q(q+1,k)$.

It is straightforward to see that the parameters
of $RS_q(n,k)$ are $[n,k,n-k+1]$.

A generalized (extended/doubly extended) Reed-Solomon code $GRS_q(n,k)$ can be obtained from
a Reed-Solomon code by multiplying each entry in every codeword by
some fixed non-zero elements of $\GF{q}$. 
In particular,
\begin{definition} A generalized Reed-Solomon code is 
    \[
GRS_q(n,k)=\{(\alpha_1 f(\omega_1),\ldots,\alpha_n f(\omega_n)):f\in \GF{q}[x]_{\deg\leq k-1}\}
    \]
    where $n\leq q+1$, $\omega_1,\ldots,\omega_n$ are distinct elements of $\GF{q}\cup\{\infty\}$ and $\alpha_1,\dots,\alpha_n\in\GF{q}^*$.
\end{definition}
It is a long-standing conjecture that when $k>3$, every $q$-ary MDS code
of length $q+1$ and dimension $k$ must be a generalized doubly extended Reed-Solomon code. In several cases (e.g. when $q$ is prime or when $k$ is small) the conjecture has been settled; see~\cite{BL}.

\subsection{Orthogonal arrays}
\label{orth-intro}

An orthogonal array of strength $t$ is a matrix $OA(N,k,v,t)$  of type  $N\times k$  with the following properties:
\begin{itemize}
    \item the entries  are in a set $S$ of cardinality $v$, $v$ is called the number of `levels';
 \item every $t$-uple of elements in $S$ appears exactly $N/v^t$ times in each subarray  of type $N\times t$.
    The number $N/v^t$   is called the \emph{index} of the orthogonal array.
\end{itemize}

Orthogonal arrays are versatile objects that since their introduction (see\cite{Rao1, Rao2}) have been used for a wide range of applications: from  statistics to cryptography, we mention in particular a recent work on orthogonal arrays and drones \cite{UAV}.
We refer to~\cite{Sloane} for further details.

In what follows we introduce a well-known geometric method to build an orthogonal array. We start with a set of $k$ homogeneous forms $f_1,\ldots, f_k$ in $n + 1$ unknowns over $\GF{q}$, we denote by $V(f_i)$ the variety associated to $f_i$. We then consider a set $\cW\subset \GF{q}^{n+1}$ with the crucial property that $|V(f_i)\cap V(f_j)\cap \cW|$ does not depend on $i$ and $j$, we denote by $N=|\cW|$. This allows the definition of a $N\times k$ array:
\[
A(f_1,\ldots,f_k)=\left\{
(
f_1(x) \ldots
f_k(x)):x\in\cW
\right\}.
\]
which turns out to be an orthogonal array.
This method has been used for Hermitian varieties in \cite{AG} and for quasi-Hermitian varieties in \cite{AGMS} in the particular case of even characteristic and $n=3$. In Section \ref{OA} of the present paper we extend the latter result removing the constraint on $n$ and on the characteristic.

\section{$\mu$-intersecting quasi-Hermitian varieties}\label{sec:quasiH_varieties}
\label{oarr}

As before, let $\Sigma_{\infty}=\{(0,x_1,x_2,\ldots,x_n)|x_i\in \GF{q^2}\}$ be the hyperplane at
infinity of $\PG(n,q^2)$.
Denote by $G$ the subgroup of
$\mathrm{PGL}(n+1,q^2)$ consisting of all collineations
\[ (x_0,x_1,\dots,x_n)\to (x_0,x_1,\dots,x_n)M \]
admitting a (normalized) matrix representative of the form
\begin{equation} \label{collin}
M=\begin{pmatrix}
    1 &\alpha_1& \alpha_2 &\ldots& \alpha_{n-1} & \alpha_n\\
    0 & 1& 0 &\ldots& 0 & \beta_1\\
       0& 0& 1 &\ldots & 0 & \beta_2\\
        \vdots&&&\ddots&&\vdots\\
        0& 0& 0 & \ldots &1 & \beta_{n-1}\\
    0 &0& 0 &\ldots& 0 & 1 \\
  \end{pmatrix},
\end{equation}
with $\alpha_s, \beta_{
j}\in \GF{q^2}$. The group $G$ has order
$q^{2(2n-1)}$. It stabilizes the hyperplane $\Sigma_\infty$, fixes
the point $P_{\infty}(0,0,\ldots,0,1)$ and acts transitively on
$\mathrm{AG}(n,q^2)$.

From now on, we will always assume the following to hold:  $a\in\GF{q^2}^*$,  $b \in\GF{q^2}\setminus\GF{q}$ and one of the conditions \ref{qh1}...\ref{qh4} holds.

Consider the affine equation of $\cB_{a,b}$:
\begin{equation}
 X_n^q-X_n+a^q(X_1^{2q}+\ldots+X_{n-1}^{2q})-a(X_1^{2}+\ldots+X_{n-1}^{2})=
	(b^q-b)(X_1^{q+1}+\ldots+X_{n-1}^{q+1})
\end{equation}

The subgroup $\Psi$ of $G$ preserving $\cB_{a,b}$ consists of all
collineations whose matrices satisfy conditions
\[\begin{cases}
  \begin{aligned} \alpha_n^q-\alpha_n+a^q(\alpha_{1}^{2q}+\dots+\alpha_{n-1}^{2q})-a(\alpha_{1}^2+\dots+\alpha_{n-1}^2)=
   (b^q-b)(\alpha_{1}^{q+1}+\dots+\alpha_{n-1}^{q+1})
    \end{aligned} \\
          \beta_1=(b-b^q)\alpha_1^q-2a \alpha_1\\
     \beta_2=(b-b^q)\alpha_2^q-2a\alpha_2\\
     \qquad\qquad\vdots\\
     \beta_{n-1}=(b-b^q)\alpha_{n-1}^q-2a\alpha_{n-1}\\
   \end{cases}. \]
Thus, $\Psi$ contains  $q^{(2n-1)}$ collineations and acts on the
affine points of $\cB_{a,b}$ as a sharply transitive permutation group.

The stabilizer in $G$ of the origin $O(1,0,0,\ldots,0)$  fixes the
line $OP_{\infty}$ pointwise, while acting transitively on the set of points distinct  from $P_{\infty}$ contained
 in any other line passing through $P_{\infty}$. Furthermore, the
center of $G$ comprises all collineations induced by
\begin{equation} \label{centre}
	\begin{pmatrix}
		1 &0& \ldots &0& \alpha_n\\
		0 & 1& \ldots&0& 0\\
 \vdots&&&&\vdots\\
		0& 0&\ldots& 1 & 0\\
		0 &0& \ldots&0 & 1 \\
	\end{pmatrix},
\end{equation}
with $\alpha_n \in \GF{q^2}$. The subset of $G$ consisting of all
collineations induced by \eqref{centre}, with $\alpha_n\in \GF{q}$, is a
normal subgroup $N$ of $G$ that acts semiregularly on the affine
points of $\mathrm{AG}(n,q^2)$ and preserves each line parallel to 
$X_n=0$.

Let $\mathcal{C}=\{a_1=0,\ldots,a_q\}$ be a transversal
of $\GF{q}$, viewed as an additive subgroup of $\GF{q^2}$.
Observe that if $\varepsilon$ is a primitive element of
$\GF{q^2}$, then $\mathcal{C}$ can be taken as
\[ \mathcal{C}=\varepsilon\GF{q}=\{ \varepsilon w | w\in\GF{q} \}. \]
Let also
 $\cR$ denote the subset of $G$
 whose
collineations are induced by matrices of the form
\begin{equation}\label{rep}
 M'= \begin{pmatrix}
    1 &\alpha_1&\ldots&\alpha_{n-2}&\alpha_{n-1}&\alpha_n\\
    0 & 1&\ldots & 0 & 0& 0\\
    \vdots&&&&&\vdots\\

    0& 0&\ldots &0 & 1 & 0\\
  0 &0& \ldots&0 &0&1 \\
  \end{pmatrix},
\end{equation}
where $\alpha_1,\dots,\alpha_{n-1}\in \GF{q^2}$ are taken arbitrarily,
while $\alpha_n$ is the unique solution in
$\mathcal{C}$ of the equation
\begin{equation}\label{ara1}
\alpha_n^q-\alpha_n+a^q(\alpha_{1}^{2q}+\ldots+\alpha_{n-1}^{2q})-a(\alpha_{1}^2+\ldots+\alpha_{n-1}^2)=
     (b^q-b)(\alpha_{1}^{q+1}+\ldots+\alpha_{n-1}^{q+1}).
\end{equation}
The set  $\cR$ has cardinality $q^{2n-2}$. Since
 $\cM_{a,b}\cap \AG(3,q^2)=\cB_{a,b}\cap \AG(3,q^2)$, the set $\cR$
 can be used to
have a collection of  
BM quasi-Hermitian varieties pairwise
intersecting in $q^{2n-2}$ affine points.
The construction is as follows.
Denote by
\[\begin{split} F(X_0, X_1,\dots, X_n)=
  X_0^q X_n^q-X_nX_0^{2q-1}+a^q(X_1^{2q}+\ldots+X_{n-1}^{2q})+\\
  -a(X_1^{2}+\ldots+X_{n-1}^{2})X_0^{2q-2}
  -(b^q-b)(X_1^{q+1}+\ldots+X_{n-1}^{q+1})X_0^{q-1}
\end{split}
\] the homogeneous form associated to the variety $\cB_{a,b}$.

We observe that if $ g, g' \in \mathcal{R}$  with $ g \neq g' $, then $ g{g'}^{-1} \notin \Psi $; equivalently, $ \mathcal{V}(F^g) \neq \mathcal{V}(F^{g'}) $.

\begin{theorem}
  The  set of  forms $\{F^g| g\in \cR\}$
  is associated to a set $\{ \cV(F^g)| g\in\cR\}$ of $q^{2n-2}$
  varieties in $\PG(n,q^2)$,  mutually
  intersecting in $q^{2n-2}$ affine points.
\end{theorem}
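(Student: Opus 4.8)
The plan is to show that the family $\{\cV(F^g)\mid g\in\cR\}$ has exactly $q^{2n-2}$ distinct members, that each is a BM quasi-Hermitian variety, and that any two meet in the same number $q^{2n-2}$ of affine points. The starting observation is that $\cM_{a,b}=(\cB_{a,b}\setminus\Sigma_\infty)\cup\cF$ and that every collineation in $G$ fixes $\Sigma_\infty$ and the Hermitian cone $\cF$ at infinity; hence for each $g\in\cR$ the image $\cM_{a,b}^g$ is again a set of the form $(\cV(F^g)\setminus\Sigma_\infty)\cup\cF$, and since $G\subseteq\PGL(n+1,q^2)$ maps quasi-Hermitian varieties to quasi-Hermitian varieties (these have the same size and hyperplane intersection numbers, which are projective invariants), each $\cM_{a,b}^g$ is a BM quasi-Hermitian variety. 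Thus the affine parts are the $\cV(F^g)\setminus\Sigma_\infty$ and it suffices to control their pairwise affine intersections.

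First I would pin down the affine action: writing $g$ for the collineation with matrix $M'$ as in~\eqref{rep}, its effect on an affine point $(x_1,\dots,x_n)$ is a translation-type map $x_i\mapsto x_i+\alpha_i$ for $i<n$ together with $x_n\mapsto x_n+\alpha_1 x_1+\dots+\alpha_{n-1}x_{n-1}+\alpha_n$ (up to the precise normalization coming from $M'$). The key point is the choice of $\alpha_n\in\cal C$: equation~\eqref{ara1} is exactly the condition that the point $(\alpha_1,\dots,\alpha_{n-1},\alpha_n)$ lies on the affine variety $\cB_{a,b}$, i.e.\ that $g$ maps the origin $O$ into $\cB_{a,b}\setminus\Sigma_\infty$. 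So $\cR$ is a transversal, modulo the stabilizer $\Psi$ of $\cB_{a,b}$, of a complete set of images of the origin; concretely, $\cR$ is a set of coset representatives for $\Psi$ inside the ``affine translation-like'' subgroup of $G$, and $|\cR|=q^{2n-1}/q = q^{2n-2}$, matching the claimed count. From this I would deduce that the $q^{2n-2}$ varieties $\cV(F^g)$ are pairwise distinct: if $\cV(F^g)=\cV(F^h)$ then $gh^{-1}$ stabilizes $\cB_{a,b}$, so $gh^{-1}\in\Psi$, forcing $g$ and $h$ to lie in the same $\Psi$-coset and hence (by the uniqueness of $\alpha_n\in\cal C$) to be equal.

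The heart of the argument is the intersection count. For $g,h\in\cR$, the affine points of $\cV(F^g)\cap\cV(F^h)$ are the images under $g$ of the affine points in $\cB_{a,b}\cap\cV(F^{hg^{-1}})$, so by transitivity it is enough to fix $g=\mathrm{id}$ and count $|(\cB_{a,b}\setminus\Sigma_\infty)\cap(\cV(F^r)\setminus\Sigma_\infty)|$ for a single $r=hg^{-1}$ in the appropriate coset family, and show the answer is $q^{2n-2}$ independently of $r$. I would substitute the explicit affine map for $r$ into the affine equation~(defining $\cB_{a,b}$) and subtract it from the original equation; the degree-$2q$ terms $a^q X_i^{2q}-aX_i^2$ and the Hermitian terms $(b^q-b)X_i^{q+1}$ are ``quasi-additive'' under the shift (their differences drop in degree because $x^{2q},x^{q+1}$ interact nicely with the Frobenius), so the equation $F(X)=F^r(X)$ collapses to a single affine equation that is $\GF{q}$-linear in $X_n$ and of controlled shape in $X_1,\dots,X_{n-1}$ — essentially of the form $\tr$ of a quadratic/Hermitian expression in $X_1,\dots,X_{n-1}$ equals a constant, i.e.\ a trace-zero hyperplane section inside $\AG(n,q^2)$. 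Counting the solutions of $X_n^q-X_n = (\text{expression in } X_1,\dots,X_{n-1})$ gives: for each of the $q^{2(n-1)}$ choices of $(X_1,\dots,X_{n-1})$, the number of $X_n$ is $q$ if the right side has zero trace and $0$ otherwise; one then checks the right side always has trace $0$ (this is where the $\GF{q}$-rationality built into $a^q\alpha^{2q}-a\alpha^2$, $(b^q-b)\alpha^{q+1}$ and the transversal $\cal C$ enters), yielding exactly $q\cdot q^{2(n-1)-? }$ — more precisely the linear constraint on $(X_1,\dots,X_{n-1})$ cuts the count down to $q^{2n-2}$.

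The main obstacle I anticipate is precisely this last computation: verifying that the difference equation $F=F^r$ really does reduce, for every $r$ arising from $\cR$, to an affine condition whose solution set has size exactly $q^{2n-2}$ and not merely ``on average'' — i.e.\ that the relevant trace expression is identically zero so that no solutions are lost, and that the surviving linear condition on $X_1,\dots,X_{n-1}$ is genuinely a single independent constraint (nontrivial, so it halves—in the $\GF q$-dimension sense—the ambient count) rather than degenerate. This is a finite but delicate manipulation with $q$-th powers, the norm/trace identities $\tr(x)=x+x^q$, $N(x)=x^{q+1}$, and the defining relation~\eqref{ara1} for the representatives; I would organize it by first treating the ``purely translational'' representatives (only $\alpha_n$ nonzero, giving the subgroup $N$) where the difference is literally $\alpha_n^q-\alpha_n\in\GF q$ and the count is transparent, and then handle the general case by reducing modulo $\Psi$ to this situation together with the observation that $\Psi$ acts sharply transitively on the affine points of $\cB_{a,b}$, so that the intersection pattern only depends on the $N$-coset of $r$.
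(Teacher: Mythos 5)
Your strategy is the paper's own: reduce to the pair $(F,F^g)$, subtract the two affine equations, and count the solutions of the resulting system; the extra group-theoretic framing (distinctness via $\Psi$-cosets, reduction to $g=\mathrm{id}$ by transitivity of $G$ on affine points) only makes explicit what the paper leaves implicit, and is sound.

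The substantive issue is the computation you defer to the end, and your guess at its shape is off in a way that matters for the count. The difference $F^g(1,X)-F(1,X)$ is \emph{not} the trace of a quadratic/Hermitian expression: the terms $a^qX_i^{2q}-aX_i^2$ and $(b^q-b)X_i^{q+1}$ cancel exactly (and the constant term vanishes because \eqref{ara1} says precisely that $F$ vanishes at $(1,\alpha_1,\dots,\alpha_n)$), leaving
\[
\Bigl(\sum_{i=1}^{n-1}s_iX_i\Bigr)^q-\sum_{i=1}^{n-1}s_iX_i=0,
\qquad s_i=2a\alpha_i+(b^q-b)\alpha_i^q,
\]
i.e.\ the condition that a $\GF{q^2}$-linear form lands in $\GF{q}$. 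Linearity is essential: it is what guarantees that the constraint on $(X_1,\dots,X_{n-1})$ has exactly $q^{2n-3}$ solutions rather than that many on average. The non-degeneracy you flag as the main obstacle is exactly where hypotheses \ref{qh1}--\ref{qh4} enter: $s_i=0$ with $\alpha_i\neq0$ would force $\alpha_i^{q-1}=-2a/(b^q-b)$, hence (taking norms) $4a^{q+1}+(b^q-b)^2=0$, which those hypotheses exclude; for $q$ even, $s_i=(b^q-b)\alpha_i^q\neq0$ trivially since $b\notin\GF{q}$. Finally, for each admissible tuple the remaining equation $X_n^q-X_n=c$ has exactly $q$ solutions because $\tr(c)=0$ identically (both $a^qu^q-au$ and $(b^q-b)v$ with $v\in\GF{q}$ have zero trace), giving $q^{2n-3}\cdot q=q^{2n-2}$. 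Until these three checks are written out your argument is a correct plan rather than a proof; once they are, it coincides with the paper's.
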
	
\begin{proof}
The set \( \{ \mathcal{V}(F^g) \mid g \in \mathcal{R} \} \) contains \( q^{2n-2} \) distinct varieties as \( \mathcal{V}(F^{g}) \neq \mathcal{V}(F^{g'}) \) whenever \( g \neq g' \).
{
We count the cardinality of the intersection of  $\mathcal{V}(F^g)$ and $\mathcal{V}(F^{g'})$, $g\neq g'$, by studying the solutions of the following system in $\mathcal{A}:=\{(x_0,x_1,\ldots,x_{n})\in\GF{q^2}^{n+1}:
x_{0}=1\}$: 
\begin{equation}
	\left\{\begin{array}{l}
		\label{orto11}
		F^g(X_0,X_1,X_2,\ldots,X_n)=0 \\
		F^{g'}(X_0,X_1,X_2,\ldots,X_n)=0
	\end{array}\right.
\end{equation}
Substituting the equations for $F^{g}$ and $F^{g'}$ we obtain
\begin{equation}
	\left\{\begin{array}{l}
          \label{orto22}
          \begin{aligned}
            X_n^q-&X_n+a^q(X_1^{2q}+\dots+X_{n-1}^{2q})-a(X_1^2+\dots+X_{n-1}^2)+
            \\ &-(b^q-b)(X_1^{q+1}+\dots+X_{n-1}^{q+1})+  
            [2a^q\alpha_1^q-(b^q-b)\alpha_1]X_1^q +\dots+\\	& [2a^q\alpha_{n-1}^q-(b^q-b)\alpha_{n-1}]X_{n-1}^q
            -[2a\alpha_1+(b^q-b)\alpha_1^q]X_1+\\ &\ldots-[2a\alpha_{n-1}+(b^q-b)\alpha_{n-1}^q]X_{n-1}=0
	\end{aligned}	\\
          \begin{aligned}
            X_n^q-&X_n+a^q(X_1^{2q}+\dots+X_{n-1}^{2q})-a(X_1^2+\dots+X_{n-1}^2)+
            \\ &-(b^q-b)(X_1^{q+1}+\dots+X_{n-1}^{q+1})+  
            [2a^q{\alpha'_1}^q-(b^q-b)\alpha'_1]X_1^q +\dots+\\	& [2a^q{\alpha'_{n-1}}^q-(b^q-b){\alpha'_{n-1}}]X_{n-1}^q
            -[2a{\alpha'_1}+(b^q-b){\alpha'_1}^q]X_1+\\ &\ldots-[2a{\alpha'}_{n-1}+(b^q-b){\alpha'_{n-1}}^q]X_{n-1}=0,
	\end{aligned}	
	\end{array}\right.
\end{equation}
where $(\alpha_1,\ldots,\alpha_n)$ and $(\alpha'_1,\ldots,\alpha'_n)$ are the coefficients appearing in the matrices associated to $g$ and $g'$ respectively.
Subtracting the first equation from the second we get
\begin{equation}\label{trc:22}
	(s_1X_1+\ldots+s_{n-1}X_{n-1})^q-(s_1X_1+\ldots+s_{n-1}X_{n-1})=0,
\end{equation}
where $s_i=2a(\alpha_i-\alpha'_i)+(b^q-b)(\alpha_i^q-{\alpha'_i}^q)$ for every $i=1,\ldots,n-1$. Since $g\neq g'$ at least some of the $s_i$ are non zero.
 Hence, Equation~\eqref{trc:22} is equivalent to the union
of $q$ linear equations
in $X_1, \ldots, X_{n-1}$ over $\GF{q^2}$. Thus, there are
$q^{2n-3}$ tuples $(X_1, \ldots, X_{n-1})$ satisfying \eqref{trc:22}.
For each such a tuple, \eqref{orto22} has $q$ solutions in $X_n$ that
provide a coset of $\GF{q}$ in $\GF{q^2}$. Therefore, the system
\eqref{orto11} has $q^{2n-2}$ solutions in $\mathcal{A}$ and the result
follows.
}

\end{proof}
The following lemma will be used in the forthcoming sections. 
\begin{lemma}
\label{different} Let $\cW:=\{(x_0,x_1,\ldots,x_n)|x_0=1,x_n \in \mathcal{C}\}$.
If $(1,x_1,\dots,x_n)$ and $(1,x_1',\dots,x_n')$ are two distinct elements of $\cW$, then there exists $g\in{\mathcal R}$ such that
\[ F^g(1,x_1,\dots,x_n)\neq F^g(1,x'_1,\dots,x'_n). \]
\end{lemma}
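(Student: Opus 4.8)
The plan is to argue by contradiction: suppose there were two distinct points $P=(1,x_1,\dots,x_n)$ and $P'=(1,x_1',\dots,x_n')$ of $\cW$ with $F^g(P)=F^g(P')$ for \emph{every} $g\in\cR$. I would then write out the value $F^g(1,y_1,\dots,y_n)$ explicitly using the matrix representative \eqref{rep} of a general element $g\in\cR$, exactly as was done in the proof of the Theorem. Recalling that applying $g$ amounts to the substitution $X_n\mapsto X_n+\sum\alpha_s X_s+\text{(higher terms governed by the }\beta\text{'s)}$ — but here, crucially, all $\beta_j=0$ in \eqref{rep} — the form $F^g$ evaluated at an affine point with $X_0=1$ reduces to $F$ evaluated at the point plus an extra \emph{linear} correction term in $(X_1,\dots,X_{n-1})$ whose coefficients are the quantities $2a^q\alpha_i^q-(b^q-b)\alpha_i$ and $-(2a\alpha_i+(b^q-b)\alpha_i^q)$, together with a constant depending only on $(\alpha_1,\dots,\alpha_n)$. (This is precisely the computation already carried out in passing from \eqref{orto1} to \eqref{orto2}.) The key point is that this correction term is \emph{the same function of the point} for $P$ and $P'$ is what we must exploit.

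Next I would subtract: the hypothesis $F^g(P)=F^g(P')$ for all $g$ becomes
\[
\bigl(s_1(x_1-x_1')+\dots+s_{n-1}(x_{n-1}-x_{n-1}')\bigr)^q
-\bigl(s_1(x_1-x_1')+\dots+s_{n-1}(x_{n-1}-x_{n-1}')\bigr)
= c
\]
for a constant $c=c(x_n,x_n')$ not depending on $g$, where again $s_i=2a\alpha_i+(b^q-b)\alpha_i^q$; in fact $c$ works out to $F(P)-F(P')$ stripped of its $X_n$--part, i.e.\ $c=(x_n^q-x_n)-((x_n')^q-x_n')$, which vanishes since $x_n,x_n'\in\mathcal{C}\subset\varepsilon\GF{q}$ forces $x_n^q-x_n$ to lie in a fixed coset — more precisely, writing $x_n=\varepsilon w$, $x_n'=\varepsilon w'$ with $w,w'\in\GF{q}$ gives $x_n^q-x_n=(\varepsilon^q-\varepsilon)w$, so $c=(\varepsilon^q-\varepsilon)(w-w')$. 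So the equation reads $\tr_{\GF{q^2}/\GF{q}}$-type vanishing of $\sum s_i(x_i-x_i')$ minus a fixed nonzero multiple of $(w-w')$. I would now observe that as $g$ ranges over $\cR$, the coefficients $(\alpha_1,\dots,\alpha_{n-1})$ range over \emph{all} of $\GF{q^2}^{n-1}$ (only $\alpha_n$ is constrained, by \eqref{ara1}); hence the vector $(s_1,\dots,s_{n-1})$ ranges over all of $\GF{q^2}^{n-1}$ as well, because the map $\alpha\mapsto 2a\alpha+(b^q-b)\alpha^q$ is an $\GF{q}$-linear bijection of $\GF{q^2}$ — its kernel is trivial since $a\neq0$ and $b\notin\GF{q}$ (this is exactly the nondegeneracy already used in the Theorem to conclude $s_i\neq0$).

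The remaining case analysis is short. If $P$ and $P'$ differ in some affine coordinate among the first $n-1$, say $x_j\neq x_j'$, then choosing $g$ with $s_j=1$ and $s_i=0$ for $i\neq j$ (possible by the surjectivity just noted) makes the displayed equation say that $x_j-x_j'$ has zero trace into $\GF{q}$; choosing instead $s_j=\varepsilon$ forces $\varepsilon(x_j-x_j')$ to have trace equal to $c=(\varepsilon^q-\varepsilon)(w-w')$, and comparing the two as $g$ varies (or simply taking two different choices of $(s_i)$) pins down $x_j-x_j'$ to be $0$, a contradiction. If instead $x_i=x_i'$ for all $i=1,\dots,n-1$, then $P\neq P'$ forces $x_n\neq x_n'$, i.e.\ $w\neq w'$; but then the left-hand side of the displayed equation is identically $0$ while the right-hand side is the fixed nonzero element $(\varepsilon^q-\varepsilon)(w-w')\in\GF{q}^*$ (nonzero because $\varepsilon^q\neq\varepsilon$), a contradiction. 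Either way the assumption fails, proving the lemma. The main obstacle I anticipate is bookkeeping: being careful that the "constant" $c$ really is independent of $g$ and correctly identifying it with the $X_n$-contribution, and handling the possibility that both $P,P'$ agree on $x_1,\dots,x_{n-1}$ — but the choice of $\cW$ with $x_n\in\mathcal{C}$ is tailored exactly so that this degenerate case produces the clean contradiction above.
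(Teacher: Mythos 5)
Your proposal is correct and follows essentially the same route as the paper: assume $F^g(P)=F^g(P')$ for all $g\in\cR$, isolate the $\alpha$-dependent linear correction $(\sum_i s_iY_i)^q-\sum_i s_iY_i$ with $s_i=2a\alpha_i+(b^q-b)\alpha_i^q$, let the $s_i$ vary to force $x_i=x_i'$ for $i\le n-1$, and then use $x_n,x_n'\in\cC$ to force $x_n=x_n'$. Two small slips that do not derail the argument: the constant $c$ is the full difference $F(P')-F(P)$ (not just its $x_n$-part), and it vanishes simply because $\mathrm{id}\in\cR$; and the injectivity of $\alpha\mapsto 2a\alpha+(b^q-b)\alpha^q$ relies on the standing hypotheses \ref{qh1}--\ref{qh4} (in odd characteristic one needs $4a^{q+1}+(b^q-b)^2\neq 0$), not merely on $a\neq 0$ and $b\notin\GF{q}$.
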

\begin{proof}
Assume by contradiction that
\begin{equation}\label{eq:repeated_rows}
F^g(1,x_1,\dots,x_n)=F^g(1,x'_1,\dots,x'_n)\;\; \forall g\in \mathcal{R}
\end{equation}
for $(1,x_1,\dots,x_n)$ and $(1,x'_1,\dots,x'_n)$ two distinct (n+1)-tuples in $\mathcal{W}$. Equality \eqref{eq:repeated_rows} implies $q^{2n-2}$ equations in the $2n$ variables $x_1,\dots,x_n,x'_1,\dots,x'_n$.
We write \eqref{eq:repeated_rows} explicitly:
\begin{multline*}
x_{n}^q-x_n+a^q\left(\sum_{i=1}^{n-1} x_i^{2q}+2\alpha_i^qx_i^q\right)-a\left(\sum_{i=1}^{n-1} x_i^2+2\alpha_ix_i\right)\\-(b-b^q)\left(\sum_{i=1}^{n-1}x_i^{q+1}+\alpha_i x_i^q+\alpha_i^qx_i\right)=
\\
={x'^q}_{n}-x'_n+a^q\left(\sum_{i=1}^{n-1}{x'^{2q}}_i+2\alpha_{i}^q{x'_{i}}^q\right)-a\left(\sum_{i=1}^{n-1}{x'_i}^2+2\alpha_ix'_i\right)-\\(b-b^q)\left(\sum_{i=1}^{n-1}{x'_i}^{q+1}+\alpha_i {x'_i}^q+\alpha_i^qx'_i\right).
\end{multline*}
Or equivalently:
\begin{multline}
  \label{eq_alpha}
  (x_n-x'_n)^q-(x_n-x'_n)+a^q\left(\sum_{i=1}^{n-1} (x_i^2-{x'_i}^2)^{q}\right)-a\left(\sum_{i=1}^{n-1} (x_i^2-(x'_i)^2)\right)\\
  -(b-b^q)\left(\sum_{i=1}^{n-1}x_i^{q+1}-{x'_i}^{q+1}\right)=\\
=a^q\left(\sum_{i=1}^{n-1}2\alpha_i^q(x_i-x_i')^q\right)-a\left(\sum_{i=1}^n2\alpha_i(x_i-x'_i)\right)+(b-b^q)\left(\sum_{i=1}^{n-1}\alpha_i(x_i-x'_i)^q+\alpha_i^q(x_i-x'_i)\right)
\end{multline}
for every possible value of $\alpha_1,\dots,\alpha_{n-1}$. Noting that the left hand side does not depend on the $\alpha_i$ and it is $0$ for $\alpha_1=\dots=\alpha_n=0$, we deduce that 
\[
a^q\left(\sum_{i=1}^n2\alpha_i^q(x_i-x_i')^q\right)-a\left(\sum_{i=1}^n2\alpha_i(x_i-x'_i)\right)+(b-b^q)\left(\sum_{i=1}^n\alpha_i(x_i-x'_i)^q+\alpha_i^q(x_i-x'_i)\right)=0,
\]
where the last equality holds for every possible value of $\alpha_1,\dots,\alpha_{n-1}$.
In particular, one can fix $\alpha_2=\dots=\alpha_{n-1}=0$ and obtain:
\[
B^q-B=0, \text{ where }B=(\alpha_1^q(b-b^q)-2\alpha_1a)(x_1-x_1')
\]
this means that $B\in \GF{q}$ for every possible value of $\alpha_1\in \GF{q^2}$; note that this is only possible if $x_1=x'_1$. The same reasoning holds in general, thus we conclude that $x_i=x_i'$ for every $i=1,\dots,n-1$. We deduce that the first equation from \eqref{eq_alpha} reads:
\[
(x_n-x'_n)^q-(x_n-x_n')=0,
\]
implying $x_n-x'_n\in\GF{q}$, because $x_n,x'_{n}\in{\mathcal{C}}$ we deduce that 
$x_n=x_n'$. We  have obtained $(1,x_1\dots,x_n)=(1,x'_1,\dots,x'_n)$, a contradiction that concludes the proof.
\end{proof}

\section{A construction of $q$-ary MDS codes}
\label{RS}
Let $G_1(x),\dots,G_N(x)$ be $N$ multivariate polynomials over $\GF{q}$.
The evaluation code $C:=C(G_1,\dots,G_N;\cW)$ defined by $G_1,\dots,G_N$ over a set $\cW$
is the image of the map
\begin{align*}ev_{G_1,\dots,G_N}:&
  \cW \to \GF{q}^N\\
  &x \to (G_1(x),\dots,G_N(x)).
 \end{align*}
Assume that $C$ has $q^t$ codewords and Hamming distance $d$. By the Singleton bound, $q^t \leq q^{N-d+1}$ and codes attaining the bound are MDS codes.
 If $C$ attains the Singleton bound  then the restrictions of all codewords to any given $ t=N-d+1$ places must all be different, namely in any $t$ positions all possible  vectors occur exactly once. This means that a necessary condition for $C$ to be MDS is that any $t$ of the varieties $V(G_i)$, for $i=1, \ldots, N$ meet in exactly one point in $\cW$. Here $V(G_i)$ is the algebraic variety associated to the form $G_i$.

Set $\cW:=\GF{q^2}\times \GF{q^2}\times \cC$ where $\cC$ is a transversal of $\GF{q}$ viewed as an additive subgroup of $\GF{q^2}$. The size of $\cW$ is $q^5$. Consider the following subset $\Omega $ of $\GF{q^2}^2$ such that for each
$(\omega_1^i, \omega_2^i )\in \Omega$ with $i\geq 5$, the following 
condition holds
\begin{equation}\label{luc1}
\begingroup 
\setlength\arraycolsep{3pt}
 \det\begin{pmatrix}
   1 & \omega_1^1 &\omega_2^1&  (\omega_1^1)^q & (\omega_2^1)^q  \\
   1 & \omega_1^2 &\omega_2^2&  (\omega_1^2)^q & (\omega_2^2)^q \\
   1 & \omega_1^3 &\omega_2^3&  (\omega_1^3)^q & (\omega_2^3)^q \\
   1 & \omega_1^4 &\omega_2^4&  (\omega_1^4)^q & (\omega_2^4)^q \\
    1 & \omega_1^5 &\omega_2^5&  (\omega_1^5)^q & (\omega_2^5)^q
  \end{pmatrix} \neq 0.
  \endgroup
  \end{equation}
  Assume first that $q$ is odd; then Condition~\eqref{luc1} implies
  \begin{equation}\label{luc11}
 \begingroup 
\setlength\arraycolsep{5pt}
 \det\begin{pmatrix}
   1 & \omega_1^1+(\omega_1^1)^q &\omega_2^1+(\omega_2^1)^q&\omega_1^1-(\omega_1^1)^q & \omega_2^1-(\omega_2^1)^q  \\
   1 & \omega_1^2+(\omega_1^2)^q &\omega_2^2+(\omega_2^2)^q& \omega_1^2 -(\omega_1^2)^q & \omega_2^2-(\omega_2^2)^q \\
   1 & \omega_1^3+(\omega_1^3)^q &\omega_2^3+(\omega_2^3)^q&  \omega_1^3-(\omega_1^3)^q & \omega_2^3-(\omega_2^3)^q \\
   1 & \omega_1^4+(\omega_1^4)^q &\omega_2^2+(\omega_2^4)^q&  \omega_1^4-(\omega_1^4)^q & \omega_2^4-(\omega_2^4)^q \\
    1 & \omega_1^5+(\omega_1^5)^q &\omega_2^5+(\omega_2^5)^q&  \omega_1^5-(\omega_1^5)^q & \omega_2^5-(\omega_2^5)^q
  \end{pmatrix} \neq 0
  \endgroup.
  \end{equation}

 Fix a basis $(1,\epsilon)$ of $\GF{q^2}$  regarded as  a vector space over $\GF{q}$, with $\epsilon$  a primitive element of $\GF{q^2}$ such that $\epsilon^q+\epsilon=0$.
Write $\omega_i^j=\omega_{i,0}^j+\omega_{i,1}^j \epsilon$ for all $i=1,2$ and $j=1,\ldots,5$.
Then \eqref{luc11} becomes
\begin{equation}
\begingroup 
\setlength\arraycolsep{3pt}
  \label{luc11b} \det\begin{pmatrix}
   1 & \omega_{1,0}^1 &\omega_{2,0}^1&  \omega_{1,1}^1 & \omega_{2,1}^1  \\
   1 & \omega_{1,0}^2 &\omega_{2,0}^2&  \omega_{1,1}^2 & \omega_{2,1}^2 \\
   1 & \omega_{1,0}^3 &\omega_{2,0}^3&  \omega_{1,1}^3 & \omega_{2,1}^3 \\
   1 & \omega_{1,0}^4 &\omega_{2,0}^4&  \omega_{1,1}^4& \omega_{2,1}^4 \\
    1 & \omega_{1,0}^5 &\omega_{2,0}^5&  \omega_{1,1}^5 & \omega_{2,1}^5
  \end{pmatrix} \neq 0
  \endgroup
\end{equation}
for any choice of five elements in $\Omega$.
Observe that all entries in~\eqref{luc11b} are over $\GF{q}$; also
Condition~\eqref{luc11b} states that
the possible rows of that matrix are the coordinates of the points of an
arc in $\AG(4,q)$.
Consequently, $S=|\Omega|\leq q$ for $q$ odd.

Consider now the case in which $q$ is even. It is always possible to fix a basis $(1,\epsilon)$ of $\GF{q^2}$   over $\GF{q}$ by taking $\epsilon\in \GF{q^2}\setminus \GF{q}$ such that $\epsilon^q=1+\epsilon$. Then we notice that Condition~\eqref{luc1} implies

  \begin{equation}\label{luc11c}
  \begingroup 
\setlength\arraycolsep{5pt}
 \det\begin{pmatrix}
   1 & \omega_1^1+(\omega_1^1)^q &\omega_2^1+(\omega_2^1)^q&\epsilon^q\omega_1^1+\epsilon(\omega_1^1)^q & \epsilon^q\omega_2^1+\epsilon(\omega_2^1)^q  \\
   1 & \omega_1^2+(\omega_1^2)^q &\omega_2^2+(\omega_2^2)^q& \epsilon^q\omega_1^2 +\epsilon(\omega_1^2)^q & \epsilon^q\omega_2^2+\epsilon(\omega_2^2)^q \\
   1 & \omega_1^3+(\omega_1^3)^q &\omega_2^3+(\omega_2^3)^q&  \epsilon^q\omega_1^3+\epsilon(\omega_1^3)^q & \epsilon^q\omega_2^3+\epsilon(\omega_2^3)^q \\
   1 & \omega_1^4+(\omega_1^4)^q &\omega_2^2+(\omega_2^4)^q&  \epsilon^q\omega_1^4+\epsilon(\omega_1^4)^q & \epsilon\omega_2^4+\epsilon(\omega_2^4)^q \\
    1 & \omega_1^5+(\omega_1^5)^q &\omega_2^5+(\omega_2^5)^q&  \epsilon^q\omega_1^5+\epsilon(\omega_1^5)^q & \epsilon^q\omega_2^5+\epsilon(\omega_2^5)^q
  \end{pmatrix} \neq 0.
  \endgroup
  \end{equation}
By substituting $\omega_i^j=\omega_{i,0}^j+\omega_{i,1}^j \epsilon$ for all $i=1,2$ and $j=1,\ldots,5$ we see that the first line of \eqref{luc11c} reads
\[
[1,\omega^{1}_{1,1}, \omega^1_{2,1},\omega^{1}_{1,0}, \omega^1_{2,0}] 
\]
so  \eqref{luc11c} up to a permutation of columns gives \eqref{luc11b}. One obtains again a bijection between the pairs in $\Omega$ and the points of an arc in $AG(4,q)$. We conclude that $S=|\Omega|\leq q$ for $q$ even.

As the only $(q+1)$-arcs of $\PG(4,q)$ are equivalent to the twisted
cubic (see \cite{BL}), we can put 
\[ \Omega:=\{ ( t+\varepsilon t^2, t^3+\varepsilon t^4 )\colon t\in\GF{q} \}.
  \]

Now, consider the following forms in $4$ indeterminates associated to a BM quasi-Hermitian varieties of $\PG(3,q^2)$
\[F_i(X_0,X_1,X_2,X_3)=X_0^qX_3^q-X_3X_0^{2q-1}+a^q(X_1^{2q}+X_{2}^{2q})-aX_0^{2q-2}(X_1^2+X_{2}^2)+\]	
		\[ -(b^q-b)X_0^{q-1}(X_1^{q+1}+X_{2}^{q+1})+[2a^q(\omega_1^i)^q-(b^q-b)\omega_1^i ]X_0^q X_1^q +[2a^q(\omega_{2}^i)^q-(b^q-b)\omega_{2}^i]X_0^qX_{2}^q+ \]
		\[-[2a\omega_1^i+(b^q-b)(\omega_1^i)^q]X_0^{2q-1}X_1 -[2a\omega_{2}^i+(b^q-b)(\omega_{2}^i)^q]X_0^{2q-1}X_{2},\]
as $\omega_j^i \in \Omega$ for $j=1,2$ and $i=1,\ldots, q $.

\begin{lemma}
  \label{main-code}
  Suppose $q>4$.
  The code \[C(F_1,\dots,F_q;\cW):= \{(F_1(1,x,y,z), F_2(1,x,y,z),\ldots,F_q(1,x,y,z))|(x,y,z) \in \cW\}\]
is an $\GF{q}$-linear $[q,5,q-4]$-MDS code over $T_0$.
\end{lemma}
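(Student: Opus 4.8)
\emph{Strategy and the key rewriting.} The plan is to show that, after a change of parameters, $C$ \emph{is} the extended Reed--Solomon code $RS_q(q,5)$ written over the alphabet $T_0$; as that code is $[q,5,q-4]$ and meets the Singleton bound, the claim follows (here $S=|\Omega|=q$). Let $\delta\colon\GF{q^2}\to T_0$ be the $\GF{q}$-linear surjection $\delta(t)=t^q-t$, which is additive, has kernel $\GF{q}$, and satisfies $\delta(\lambda u)=\lambda\,\delta(u)$ for $\lambda\in\GF{q}$. Using $x^{q+1}\in\GF{q}$, one checks the identities $z^q-z=\delta(z)$, $a^qx^{2q}-ax^2=\delta(ax^2)$ and $(b^q-b)x^{q+1}=\delta(bx^{q+1})$; and, setting $s_j^{(i)}:=2a\omega_j^i+(b^q-b)(\omega_j^i)^q$ so that the coefficients of $X_1^q$ and $X_1$ in $F_i$ are $(s_1^{(i)})^q$ and $-s_1^{(i)}$, also $(s_1^{(i)})^qx^q-s_1^{(i)}x=\delta(s_1^{(i)}x)$ (and likewise for $X_2$). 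Adding all contributions and using additivity of $\delta$ gives
\[
F_i(1,x,y,z)=\delta\big(\psi(x,y,z)+s_1^{(i)}x+s_2^{(i)}y\big),\qquad \psi(x,y,z):=z+a(x^2+y^2)-b(x^{q+1}+y^{q+1}),
\]
so in particular every entry of every codeword lies in $T_0$.

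\emph{Reduction to a polynomial evaluation.} Parametrise $\Omega$ by $t_i\in\GF{q}$ through $\omega_1^i=t_i+\varepsilon t_i^2$ and $\omega_2^i=t_i^3+\varepsilon t_i^4$; the $t_i$ are precisely the $q$ elements of $\GF{q}$. Since $t_i\in\GF{q}$, a direct computation gives $s_1^{(i)}=t_iA+t_i^2B$ and $s_2^{(i)}=t_i^3A+t_i^4B$ with the constants $A:=2a+(b^q-b)$ and $B:=2a\varepsilon+(b^q-b)\varepsilon^q$. Substituting into the formula above and using that $\delta$ is $\GF{q}$-linear and $t_i\in\GF{q}$,
\[
F_i(1,x,y,z)=P+\delta(Ax)\,t_i+\delta(Bx)\,t_i^2+\delta(Ay)\,t_i^3+\delta(By)\,t_i^4,\qquad P:=\delta(\psi(x,y,z))\in T_0 .
\]
Hence the codeword attached to $(x,y,z)$ is the evaluation vector $\big(g(t_1),\dots,g(t_S)\big)$ of the polynomial $g(t)=P+\delta(Ax)\,t+\delta(Bx)\,t^2+\delta(Ay)\,t^3+\delta(By)\,t^4\in T_0[t]$ of degree $\le 4$, taken at all $q$ points of $\GF{q}$.

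\emph{Identification with a Reed--Solomon code.} For fixed $(x,y)$, as $z$ runs over the transversal $\cC$ the element $P=\delta(z)+\delta\big(a(x^2+y^2)-b(x^{q+1}+y^{q+1})\big)$ runs bijectively over $T_0$, because $\delta$ restricted to a transversal of $\GF{q}$ is a bijection onto $T_0$. Thus $(x,y,z)\mapsto(x,y,P)$ is a bijection $\cW\to\GF{q^2}\times\GF{q^2}\times T_0$, and the codeword depends $\GF{q}$-linearly on $(x,y,P)$, the dependence factoring as $(x,y,P)\mapsto\big(P,\delta(Ax),\delta(Bx),\delta(Ay),\delta(By)\big)\in T_0^5$ followed by the $\GF{q}$-linear, injective (since $q\ge5$) evaluation-at-$\GF{q}$ map $T_0^5\to T_0^S$. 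The first factor is a $\GF{q}$-linear bijection onto $T_0^5$ as soon as $x\mapsto(\delta(Ax),\delta(Bx))$ is a bijection $\GF{q^2}\to T_0^2$, i.e.\ as soon as $A,B$ are $\GF{q}$-linearly independent in $\GF{q^2}$. Granting this, $C$ is the image of a $\GF{q}$-linear injection of a $5$-dimensional $\GF{q}$-space, hence a $\GF{q}$-linear $[S,5]$-code over $T_0$; identifying $T_0\cong\GF{q}$ as $\GF{q}$-vector spaces identifies $C$ with $RS_q(q,5)$, whose minimum distance is $q-4=S-4$, so $C$ is MDS.

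\emph{The main point.} Everything hinges on the one computation of the $2\times2$ ``determinant''
\[
AB^q-A^qB=(\varepsilon^q-\varepsilon)\big(4a^{q+1}+(b^q-b)^2\big).
\]
This is nonzero because $\varepsilon\notin\GF{q}$ and because $4a^{q+1}+(b^q-b)^2\neq0$ under the standing hypotheses on the pair $(a,b)$: for $q$ odd this is included in the quasi-Hermitian conditions \ref{qh1}--\ref{qh4}, while for $q$ even it equals $(b^q-b)^2\neq0$ since $b\notin\GF{q}$. This is precisely where the quasi-Hermitian hypotheses are used; the remaining work is the verification of the trace identities of the first paragraph and the routine substitution of the explicit arc $\Omega$.
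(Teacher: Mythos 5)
Your proposal is correct, and it takes a genuinely different route from the paper. The paper's proof proceeds in three separate steps: it gets $|C|=q^5$ from Lemma~\ref{different}; it verifies $\GF{q}$-linearity by a direct computation exhibiting, for two points of $\cW$, a third point $(x_3,y_3,z_3)\in\cW$ whose codeword is the sum (and similarly for scalars); and it bounds the minimum distance by showing, via the arc condition~\eqref{luc1}, that the $5\times 5$ homogeneous system in $(A,B,C,D,E)$ has only the trivial solution, which then forces $X_1=X_2=0$ and $X_3=0$ in $\cC$. You instead observe that every $F_i(1,x,y,z)$ is $\delta(\psi(x,y,z)+s_1^{(i)}x+s_2^{(i)}y)$ for the trace-zero map $\delta(u)=u^q-u$, substitute the explicit twisted-cubic parametrisation of $\Omega$, and identify $C$ outright with the evaluation code of $\{P+\delta(Ax)t+\delta(Bx)t^2+\delta(Ay)t^3+\delta(By)t^4\}$ at all points of $\GF{q}$; linearity, dimension and distance then all fall out of the Reed--Solomon structure at once, with the only nondegeneracy needed being $AB^q-A^qB=(\varepsilon^q-\varepsilon)(4a^{q+1}+(b^q-b)^2)\neq 0$ --- I checked this computation and it is right, and the quantity $4a^{q+1}+(b^q-b)^2$ is indeed nonzero under each of \ref{qh1}--\ref{qh4} (for $q$ even it reduces to $(b^q-b)^2\neq0$). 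Notably this is the \emph{same} quantity the paper implicitly needs when it deduces $X_1=X_2=0$ from $B=C=D=E=0$, so the two arguments hinge on the same nondegeneracy, packaged differently. What your approach buys: it proves Lemma~\ref{main-code} and essentially the subsequent theorem (equivalence with the extended Reed--Solomon code) simultaneously, and it does not need Lemma~\ref{different} at all, since injectivity of $(x,y,z)\mapsto$ codeword follows from the bijection $\cW\to\GF{q^2}^2\times T_0$ and the injectivity of degree-$\le4$ evaluation for $q\ge5$. What it gives up: the paper's distance argument works for \emph{any} $\Omega$ satisfying the arc condition~\eqref{luc1}, whereas your argument is tied to the specific twisted-cubic choice of $\Omega$ (which is the one fixed in the paper before the lemma, so this is not a gap, merely a loss of generality).
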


\begin{proof}
First, we observe that by Lemma~\ref{different}, $|C|=q^5$.
Next, we are going to show that $C$ is $\GF{q}$-linear that is, a vector subspace of $T_0^q$. Take $(x_1,y_1,z_1)$, and $(x_2,y_2,z_2)\in \cW$.

For any $\omega_1, \omega_2 \in \Omega$ set \[F_{\omega_1,\omega_2}=X_0^qX_3^q-X_3X_0^{2q-1}+a^q(X_1^{2q}+X_{2}^{2q})-aX_0^{2q-2}(X_1^2+X_{2}^2)+\]	
		\[ -(b^q-b)X_0^{q-1}(X_1^{q+1}+X_{2}^{q+1})+[2a^q(\omega_1^i)^q-(b^q-b)\omega_1^i ]X_0^q X_1^q +[2a^q(\omega_{2}^i)^q-(b^q-b)\omega_{2}^i]X_0^qX_{2}^q+ \]
		\[-[2a\omega_1^i+(b^q-b)(\omega_1^i)^q]X_0^{2q-1}X_1 -[2a\omega_{2}^i+(b^q-b)(\omega_{2}^i)^q]X_0^{2q-1}X_{2}\]
 Then $F_{\omega_1,\omega_2}(1,x_1,y_1,z_1)+F_{\omega_1,\omega_2}(1,x_2,y_2,z_2)=F_{\omega_1,\omega_2}(1,x_3,y_3,z_3)$
 where $x_3=x_1+x_2$, $y_3=y_1+y_2$ and $z_3=z_1+z_2+2a(x_1x_2+y_1y_2)+(b^q-b)(x_1^qx_2+y_1^qy_2)+s$ with $s \in \GF{q}$ such that $z_3 \in \cC$.

 Furthermore, for any $\lambda \in \GF{q}$ \[\lambda F_{\omega_1,\omega_2}(1,x_1,y_1,z_1)=F_{\omega_1,\omega_2}(1,x,y,z)\]
 where $x=\lambda x_1$, $y=\lambda y_1$ and $z$ is the unique element in $\cC$ which is a root of the equation
 $Z^q-Z=d$, with \[d=\lambda z_1^q-\lambda z_1+a^q(\lambda-\lambda^2)(x_1^{2q}+y_1^{2q})-a(\lambda-\lambda^2)(x_1^2+y_1^2)+(b-b^q)((\lambda-\lambda^2)(x_1^{q+1}+y_1^{q+1}).\]

Since $C$ is $\GF{q}$-linear, from $|C|=q^5$ it follows that the dimension of $C$,
regarded as an $\GF{q}$-vector space, is $5$.

Now, we compute its minimum distance, which is $q-n$ where
\[n=\max_{c\in C\setminus \{\bf{0}\}}|\{i:c_i=0\}|.\]
Because of the Singleton bound, $n\geq 4$. We are going to show that $n=4$. Consider the following system:
\[ \left\{\begin{array}{lllll@{=}l}
 A&+\alpha_1^qB &+\alpha_2^qC&+\alpha_1D&+\alpha_2E&0 \\
 A&+\beta_1^qB &+\beta_2^qC&+\beta_1D&+\beta_2E&0\\
 A&+\gamma_1^qB &+\gamma_2^qC&+\gamma_1D&+\gamma_2E&0\\
 A&+\delta_1^qB &+\delta_2^qC&+\delta_1D&+\delta_2E&0\\
 A&+\eta_1^qB &+\eta_2^qC&+\eta_1D&+\eta_2E&0
     \end{array}\right.\]
for $\alpha_i,\beta_i,\gamma_i,\delta_i,\eta_i \in \Omega$ and \[A=X_3^q-X_3+a^q(X_1^{2q}+X_{2}^{2q})-a(X_1^2+X_{2}^2)-(b^q-b)(X_1^{q+1}+X_{2}^{q+1}),\]
\[B=2a^qX_1^q -(b^q-b)X_1,\]
\[C=2a^qX_2^q -(b^q-b)X_2,\]
\[D=(b-b^q)X_1^q-2aX_1,\]
\[E=(b-b^q)X_2^q-2aX_2.
\]

Condition~\eqref{luc1} implies that the previous system has only one solution $(A,B,C,D,E)=(0,0,0,0,0)$. Under our hypothesis,  we obtain
$X_1=X_2=0$ and $X_3^q-X_3=0$ which have only one solution in $\cC$. Therefore the following system in $X_1$, $X_2$, $X_3$

\[ \left\{\begin{array}{c@{=}l}
F_{\alpha_1,\alpha_2}(1,X_1,X_2,X_3)&0\\
F_{\beta_1,\beta_2}(1,X_1,X_2,X_3)&0\\
F_{\gamma_1,\gamma_2}(1,X_1,X_2,X_3)&0\\
F_{\delta_1,\delta_2}(1,X_1,X_2,X_3)&0\\
F_{\eta_1,\eta_2}(1,X_1,X_2,X_3)&0
 \end{array}\right.\]
 has only one solution in $\cW$, which is $(X_1,X_2,X_3)=(0,0,0)$. This implies that a codeword having at least five zero coordinates is the zero vector, hence $n=4$ and the minimum distance of $C$ is $q-4$.
\end{proof}
It is now convenient to reduce the code $C$ to a code $C'$ which is
defined over $\GF{q}$ instead of $T_0$ ( $T_0$ is isomorphic to $\GF{q}$). Write 
$\varepsilon^q+\varepsilon=a_0$, so, for any $X\in\GF{q^2}$
$\mathrm{Tr}(x_0+\varepsilon x_1)=2x_0+a_0x_1$
So $T_0=\{ x\in\GF{q^2} | \mathrm{Tr}(x)=0 \}=
\{ (a_0-2\varepsilon)x_0 | x_0\in\GF{q} \}$ (observe that
this expression makes sense also in the case $a_0=0$).
Put $\theta=(a_0-2\varepsilon)$.

If $q$ is even, then $T_0=\GF{q}$
and all codewords are in $\GF{q}^n$.
In particular $(\theta)^{-1}C$ is a code over $\GF{q}$.
If $q$ is odd and
we divide each entry of each word in $C$ by $\theta$ and
we obtain a code $C'=(\theta^{-1})C$ (which is obviously equivalent to $C$)
defined over $\GF{q}$.

\begin{theorem}
  Assume $q>4$; then
  the code $C':=(\theta^{-1})C(F_0,F_1,\dots,F_{q-1};\cW)$
  is equivalent to an extended Reed-Solomon code.
  In particular, it can be further extended to a $[q+1,5,q-3]$-code.
\end{theorem}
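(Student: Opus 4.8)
The plan is to show that, after scaling by $\theta^{-1}$, every coordinate of a codeword of $C$ equals the value at a point of $\GF{q}$ of one fixed polynomial of degree at most $4$, so that $C'$ is literally the code of evaluations of all such polynomials over all of $\GF{q}$.

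First I would revisit the expression for $F_i(1,x,y,z)$ underlying the proof of Lemma~\ref{main-code}: with $A,B,C,D,E$ as defined there one has $F_i(1,x,y,z)=A+(\omega_1^i)^qB+(\omega_2^i)^qC+\omega_1^iD+\omega_2^iE$. Applying the $q$-power map and using $a^{q^2}=a$, $b^{q^2}=b$ shows $B=-D^q$ and $C=-E^q$; hence, writing $L(u):=u-u^q$, a $\GF{q}$-linear map $\GF{q^2}\to T_0$, we get $F_i(1,x,y,z)=A+L(\omega_1^iD)+L(\omega_2^iE)$, and one checks likewise $A\in T_0$. Now substitute the chosen parametrisation $\omega_1^i=t_i+\varepsilon t_i^2$, $\omega_2^i=t_i^3+\varepsilon t_i^4$ with $t_i$ running over $\GF{q}$; by $\GF{q}$-linearity of $L$,
\[ F_i(1,x,y,z)=A+t_iL(D)+t_i^2L(\varepsilon D)+t_i^3L(E)+t_i^4L(\varepsilon E)=P_{x,y,z}(t_i), \]
where $P_{x,y,z}(T):=A+L(D)\,T+L(\varepsilon D)\,T^2+L(E)\,T^3+L(\varepsilon E)\,T^4$ has degree at most $4$ and all five coefficients in $T_0$.

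So every codeword of $C$ lies in the image of the evaluation map $\iota\colon T_0[T]_{\deg\le 4}\to T_0^{\,q}$, $P\mapsto(P(t_0),\dots,P(t_{q-1}))$. Since $q\ge 5$ and the $t_i$ are distinct, $\iota$ is an injective $\GF{q}$-linear map, so $\mathrm{im}\,\iota$ is a $5$-dimensional $\GF{q}$-subspace; as $\dim_{\GF{q}}C=5$ by Lemma~\ref{main-code}, we conclude $C=\mathrm{im}\,\iota$. Writing $T_0=\theta\,\GF{q}$ with $\theta=a_0-2\varepsilon$ (as in the paragraph preceding the theorem), a polynomial lies in $T_0[T]_{\deg\le 4}$ iff it equals $\theta Q$ with $Q\in\GF{q}[T]_{\deg\le 4}$, and $\theta^{-1}(\theta Q)(t_i)=Q(t_i)$; hence $C'=\theta^{-1}C=\{(Q(t_0),\dots,Q(t_{q-1}))\colon Q\in\GF{q}[T]_{\deg\le 4}\}$. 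Because $\{t_0,\dots,t_{q-1}\}=\GF{q}$, this is the extended Reed--Solomon code $RS_q(q,5)$ up to the coordinate permutation fixed by the enumeration; a coordinate permutation being a $\GF{q}$-linear isometry, $C'$ is equivalent to $RS_q(q,5)$.

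For the last sentence I would append to each word $(Q(t_0),\dots,Q(t_{q-1}))$ the leading coefficient of $Q$ (the evaluation ``at infinity''); this yields the doubly extended Reed--Solomon code $RS_q(q+1,5)$, which is $\GF{q}$-linear of length $q+1=S+1$, still of dimension $5$ since the augmented evaluation map remains injective, and MDS, hence of minimum distance $(S+1)-5+1=S-3$. The only slightly delicate point is the surjectivity $C=\mathrm{im}\,\iota$, which I obtain from the dimension count already furnished by Lemma~\ref{main-code} rather than by parametrising $(A,L(D),L(\varepsilon D),L(E),L(\varepsilon E))$ directly; the rest is the sign-sensitive but routine substitution above, and one must say ``equivalent'' rather than ``equal'' since identifying $T_0$ with $\GF{q}$ and ordering the evaluation points introduces the scalar $\theta$ and a permutation of coordinates. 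The construction tacitly assumes $q\ge 5$, as otherwise neither is $\Omega$ a $q$-arc nor is a $[q,5,q-4]$-code meaningful.
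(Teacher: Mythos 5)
Your proof is correct, and while it reaches the same destination by the same overall strategy (exhibit each codeword as the evaluation vector of a degree-$\le 4$ polynomial, then use a dimension/cardinality count to get all of them), the execution differs from the paper's in two genuine ways. First, the paper only asserts that $\widehat{F}(t)$ ``by construction has degree at most $4$ in $t$'' and lies in $\GF{q}[t]$; you make this explicit via the identities $B=-D^q$, $C=-E^q$ (which indeed follow from $a^{q^2}=a$, $b^{q^2}=b$) and the operator $L(u)=u-u^q$, obtaining the clean closed form $P_{x,y,z}(T)=A+L(D)T+L(\varepsilon D)T^2+L(E)T^3+L(\varepsilon E)T^4$ with all coefficients visibly in $T_0$. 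This is the most opaque step of the published argument and your version is a real improvement in transparency. Second, for surjectivity the paper proves that the parameter-to-polynomial map $\Psi$ is injective (invoking Lemma~\ref{different} directly) and then compares cardinalities $q^5=|\GF{q}[t]_{\deg\le 4}|$, whereas you observe $C\subseteq\mathrm{im}\,\iota$ with both sides $5$-dimensional over $\GF{q}$, importing the dimension of $C$ from Lemma~\ref{main-code}; since that lemma itself rests on Lemma~\ref{different}, the logical dependency is the same, but your containment-plus-dimension argument is shorter and avoids re-proving injectivity. Your remarks on the need for $q\ge 5$ and on ``equivalent'' versus ``equal'' (the scalar $\theta$ and the coordinate permutation) are accurate and are points the paper passes over silently; the handling of the doubly extended code via the coefficient at infinity is standard and matches what the paper's subsequent remark makes concrete.
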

\begin{proof}
  By Lemma~\ref{main-code}, $C'$ is a $\GF{q}$-linear $[q,5,q-4]$ code
  defined over $\GF{q}$.

  Recall that the polynomials $F_s(1,X_1,X_2,X_3)=F_{\alpha,\beta}(1,X_1,X_2,X_3)$ 
  are defined in terms of coefficients $\alpha,\beta$ where $\alpha=\alpha_0+\varepsilon\alpha_1$
  and $\beta=\beta_0+\varepsilon\beta_1$ and $[1,\alpha_0,\alpha_1,\beta_0,\beta_1]$
  are the coordinates of a point on the standard
  rational normal curve in $\PG(4,q)$.
   Thus, we can take
   \[ (1,\alpha_0,\alpha_1,\beta_0,\beta_1)=
   \begin{cases}
   (1,\omega^s,\omega^{2s},\omega^{3s},\omega^{4s}) & \text{ if $0<s<q$}  \\
   (1,0,0,0,0) & \text{ if $s=0$ }
   \end{cases}
   \]
  for $\omega$ a fixed primitive element of $\GF{q}$.
  So, define the map
  $\psi:\{0,1,\dots,q-1\}\to\GF{q}$ as the function
  \[ \psi(i):=\begin{cases}
      \omega^i & \text{if } i\neq0 \\
      0        & \text{if } i=0
  \end{cases}\]
  and put, for any $t\in\GF{q}$,
   \[ \widetilde{F}_{X_1,X_2,X_3}(t):=F_{\psi^{-1}(t)} (1,X_1,X_2,X_3). \]
Now, regard
  the coefficients $X_1,X_2,X_3$ as a set of  parameters in $\GF{q}$, by writing $X_1=x_0+\varepsilon x_1$, $X_2=y_0+\varepsilon y_1$,
$X_3=z_0+\varepsilon z_1$ with
  $x_i,y_i,z_i\in\GF{q}$.

  We observe that $X_3^q-X_3$ gives $-\theta z_1$ and does not depend on $z_0$. Thus, instead of $\widetilde{F}_{x_0+\varepsilon x_1,y_0+\varepsilon y_1,z_0+\varepsilon z_1}(t)$ we can write $\widetilde{F}_{x_0+\varepsilon x_1,y_0+\varepsilon y_1, \varepsilon z_1}(t)$ and we take into account all of the polynomials.

  Now, for each value of $(x_0,x_1,y_0,y_1,z_1)\in\GF{q}^5$, we can consider the
  polynomials
  \[ \widehat{F}_{x_0,x_1,y_0,y_1,z_1}(t):=
  \theta^{-1}\widetilde{F}_{x_0+\varepsilon x_1,y_0+\varepsilon y_1,\varepsilon z_1}(t).\]
 
  The map $\Psi:(x_0,x_1,y_0,y_1,z_1)\to \widehat{F}_{x_0,x_1,y_0,y_1,z_1}(t)$
  is injective. Indeed,
  suppose that there are two vectors $(x_0,x_1,y_0,y_1,z_1)$
  and $(x_0',x_1',y_0',y_1',z_1')$ providing the same polynomial in $t$.
  This equivalent to say that there are two points 
  $P=(1,x_0+\varepsilon x_1,y_0+\varepsilon y_1,\varepsilon z_1)$ and
  $P'=(1,x_0'+\varepsilon x_1',y_0'+\varepsilon y_1',\varepsilon z_1')$
  such that $P\neq P'$ and for all $i=0,\dots q-1$ we have $F_i(P)=F_i(P')$.
  However, using Condition~\eqref{luc1} as in Lemma~\ref{main-code}, this
  yields a contradiction.
  
 Next, observe that the polynomial $\widehat{F}(t)$, by construction, has degree at most $4$ in $t$.
  Also, since $\widetilde{F}_{X_1,X_2,X_3}(t)\in T_0$, we have
  $\widehat{F}(t)\in\GF{q}$
  for all
  $t\in\GF{q}$; so $\widehat{F}(t)\in\GF{q}[t]$.

The vector space $\GF{q}[t]_{\deg\leq 4}$
of the polynomials of degree at most $4$ in one indeterminate over $\GF{q}$ has
  dimension $5$. Since the image of $\Psi$ has size
  $q^5$, the map $\Psi$ is also surjective and
  the evaluation over $\GF{q}$
  of any polynomial in $\GF{q}[t]_{\deg\leq 4}$
  appears as a codeword in $C'$.
  It follows that, up to a permutation of columns,
  $C=\{ (f(0), f(\omega^1),\dots, f(\omega^{q-1})):
  f\in\GF{q}[t]_{\deg\leq 4} \}$, where $\omega$ is a primitive
  element of $\GF{q}$. This proves that $C'$ is equivalent to an extended Reed-Solomon
  code with parameters $[q,5,q-4]$.
\end{proof}
\begin{remark}

The doubly extended Reed-Solomon code of parameters
$[q+1,5,q-3]$ containing $C'$ is obtained from $C'$ by adding to each codeword a further component with value $F_{q+1}(1,x,y,z)$, where $F_{q+1}(X_0,X_1,X_2,X_3)=(b^q-b)X_2^qX_0+2aX_2X_0^q$.
\end{remark}

\section{Orthogonal arrays}
\label{OA}
In \Cref{orth-intro} we introduced orthogonal arrays and described a method to build one starting from homogeneous forms. In this section we apply the aforementioned method to obtain an orthogonal array from a family of quasi-Hermitian varieties. We keep all previous notation introduced in \Cref{sec:quasiH_varieties}.  The proof of the following theorem relies on \Cref{different}.

\begin{theorem}
Given any prime power $q$, the matrix $\mathcal{A}=A(F^g,g\in \mathcal{R};\mathcal{W})$, where
\[
\mathcal{W}=\{(x_0,\ldots,x_{n})\in \GF{q^2}^{n+1}:x_{0}=1,x_n\in \mathcal{C}\},
\]
is a simple $OA(q^{2n-1},q^{2n-2},q,2)$ of index $\mu=q^{2n-3}$.
\end{theorem}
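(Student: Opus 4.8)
The plan is to verify directly the two defining properties of an orthogonal array $OA(N,k,v,t)$ with $N=q^{2n-1}$, $k=q^{2n-2}$, $v=q$, $t=2$, and then to establish simplicity. First I would check the size: the rows of $\mathcal{A}$ are indexed by the points of $\mathcal{W}=\{(1,x_1,\dots,x_n):x_n\in\mathcal{C}\}$, and since $|\GF{q^2}|^{n-1}=q^{2(n-1)}$ choices for $(x_1,\dots,x_{n-1})$ are combined with $|\mathcal{C}|=q$ choices for $x_n$, we get $|\mathcal{W}|=q^{2n-2}\cdot q=q^{2n-1}=N$. The columns are indexed by $g\in\mathcal{R}$, and $|\mathcal{R}|=q^{2n-2}=k$ as recorded in Section~\ref{sec:quasiH_varieties}. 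The entry in position $(P,g)$ is $F^g(P)$; I would first note that this indeed lies in a set of size $v=q$: modulo the affine normalization $X_0=1$, the form $F^g$ takes values in a fixed additive coset of $\GF{q}$ in $\GF{q^2}$ (this is visible from the affine shape of $F$ and the action of the collineations in $\mathcal{R}$, exactly as in the computation leading to Equation~\eqref{orto2}), so each column takes exactly $q$ distinct values, and after identifying that coset with $\GF{q}$ we get entries from an alphabet of size $q$.

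Next comes the strength-$2$ condition: for any two distinct columns $g_1,g_2\in\mathcal{R}$ and any ordered pair $(u_1,u_2)$ of values, the number of rows $P\in\mathcal{W}$ with $F^{g_1}(P)=u_1$ and $F^{g_2}(P)=u_2$ should be exactly $N/v^2=q^{2n-1}/q^2=q^{2n-3}$, which is the claimed index $\mu$. Here I would exploit the group structure: the group $G$ of collineations~\eqref{collin} acts on the relevant configuration, and in particular the translation-like subgroup preserves $\mathcal{W}$ (up to the normalization in $\mathcal{C}$) and acts in a way that permutes the level sets of each $F^g$ transitively while respecting the pairing of columns; alternatively, and more concretely, one replaces $g_1,g_2$ by $\mathrm{id}$ and $g=g_1^{-1}g_2$ and reduces to counting solutions in $\mathcal{W}$ of the pair of equations $F(P)=u_1$, $F^g(P)=u_2'$ for appropriate $u_1,u_2'$. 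Subtracting the two equations, exactly as in the proof of the $\mu$-intersecting theorem, yields a single $\GF{q}$-linear condition $(s_1X_1+\dots+s_{n-1}X_{n-1})^q-(s_1X_1+\dots+s_{n-1}X_{n-1})=c$ in $X_1,\dots,X_{n-1}$ with all $s_i\ne0$; this has $q^{2n-3}$ solutions $(X_1,\dots,X_{n-1})$ for each admissible $c$, and then the single remaining equation $F(P)=u_1$ pins down $X_n$ inside $\mathcal{C}$ uniquely. Tallying these gives $q^{2n-3}$ rows for each value-pair, which is the required uniform count; in particular every pair of values occurs, so $v^t=q^2$ pairs each appearing $q^{2n-3}$ times accounts for all $q^{2n-1}$ rows.

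Finally, simplicity means that no two rows of $\mathcal{A}$ coincide, i.e. the map $P\mapsto (F^g(P))_{g\in\mathcal{R}}$ is injective on $\mathcal{W}$. This is precisely the content of Lemma~\ref{different}: if $P=(1,x_1,\dots,x_n)$ and $P'=(1,x_1',\dots,x_n')$ are distinct elements of $\mathcal{W}$, then there is some $g\in\mathcal{R}$ with $F^g(P)\ne F^g(P')$, so the corresponding rows differ. Hence $\mathcal{A}$ has $q^{2n-1}$ pairwise distinct rows and is a simple orthogonal array. I expect the main obstacle to be the strength-$2$ counting step — specifically, making rigorous the reduction ``$(g_1,g_2)\rightsquigarrow(\mathrm{id},g_1^{-1}g_2)$'' together with the claim that each admissible constant $c$ (equivalently, each value pair $(u_1,u_2)$) is actually realized and realized uniformly; once the subtraction trick from the $\mu$-intersecting theorem is in place and one checks that the pair of right-hand-side values ranges over all of the product coset as $(u_1,u_2)$ varies, the linear-algebra count $q^{2n-3}$ is routine. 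The size computation and the simplicity step are immediate given $|\mathcal{R}|=q^{2n-2}$ and Lemma~\ref{different}.
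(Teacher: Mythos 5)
Your proposal is correct and follows essentially the same route as the paper: subtract the two value-equations to obtain a trace-type condition $A^q-A=\delta-\gamma$ with $q^{2n-3}$ solutions in $(X_1,\dots,X_{n-1})$ (admissible for every value pair since the values lie in $T_0$), pin down $X_n$ uniquely in the transversal $\mathcal{C}$, and invoke Lemma~\ref{different} for simplicity. The only (minor) difference is that you explicitly flag the reduction of a general column pair $(g_1,g_2)$ to $(\mathrm{id},g_1^{-1}g_2)$, a step the paper leaves implicit by treating only pairs involving the identity column.
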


\begin{proof}
In order to prove the claim we start by studying the system
\begin{equation}\label{eq:system}
\begin{cases}
F(X_0,\ldots,X_n)=\gamma\\
F^g(X_0,\ldots,X_n)=\delta,
\end{cases}
\end{equation}
where $g\in \mathcal{R}\setminus \{Id\}$ and $\gamma, \delta \in T_0$. By definition of $\mathcal{W}$ we can write \eqref{eq:system} as
\[
\begin{cases}\label{eq:system1}
\begin{aligned}
  X_n^q-X_n+a^q(X_1^{2q}+\ldots&+X_{n-1}^{2q})-a(X_1^2+\ldots+X_{n-1}^2)\\-
  &(b^q-b)(X_1^{q+1}+\ldots+X_{n-1}^{q+1})=\gamma
\end{aligned}
  \\ \\
  \begin{aligned}
X_{n}^q-X_n+&a^q(X_1^{2q}+\ldots+X_{n-1}^{2q}+2\alpha_1^qX_1^q+\ldots+2\alpha_{n-1}^qX_{n-1}^q)\\
&-a(X_1^2+\ldots+X_{n-1}+2\alpha_1X_1+\ldots+2\alpha_{n-1}X_{n-1})\\
    &-(b^q-b)\left(\sum_{i=1}^{n-1}X_i^{q+1}+\alpha_i X_i^q+\alpha_i^qX_i\right)=\delta.
    \end{aligned}
\end{cases}
\]
The subtraction of the first equation from the second one gives:
\[
  \begin{cases}\label{eq:system2}
    \begin{aligned}
X_n^q-X_n+a^q\left(\sum_{i=1}^{n-1}X_i^{2q}\right)&-a\left(\sum_{i=1}^{n-1}X_i^2\right)-
(b^q-b)\left(\sum_{i=1}^{n-1}X_i^{q+1}\right)=\gamma
\end{aligned}
    \\ 
    \begin{aligned}
      2a^q\left(\sum_{i=1}^{n-1}\alpha_i^qX_i^q\right)& -2a\left(\sum_{i=1}^{n-1}\alpha_iX_i\right)
      -(b-b^q)\left(\sum_{i=1}^{n-1}\alpha_i X_i^q+\alpha_i^qX_i\right)=\delta-\gamma.
      \end{aligned}
\end{cases}
\]
Let $A:=-2a(\sum_{i=1}^{n-1}\alpha_iX_i)-b(\sum_{i=1}^{n-1}\alpha_i^qX_i)$; we notice that the second equation of the system above is of the kind
\begin{equation}\label{eq:A_polynomial}
A^q-A=\delta-\gamma.
\end{equation}
By~\cite{H1}*{Theorem 1.19}, this equation has $q$ solutions if $Tr(\delta-\gamma)=0$. The latter is always true because $\delta,\gamma\in T_0$. We conclude that there are $q$ possible values of $A$ that satisfy \eqref{eq:A_polynomial}, therefore there are $q^{2n-3}$ $(n-1)$-tuples $(X_1,\dots,X_{n-1})\in \GF{q^2}^{n-1}$ that satisfy the system of equations. Recalling that $X_n$ ranges over $\mathcal{C}$, we deduce that the number of solutions of~\eqref{eq:system} is $q^{2n-3}$.

The fact that the array is simple, i.e. that it does not contain repeated rows, 
now follows from Lemma~\ref{different}.
 \end{proof}

\section*{Acknowledgements}
All of the authors thank the Italian National Group for Algebraic and Geometric Structures and their Applications (GNSAGA--INdAM) for its support to their research.
The work of A. Aguglia and V. Siconolfi has also been
 partially founded by the European Union under the Italian National Recovery and Resilience Plan (NRRP) of NextGenerationEU, partnership on “Telecommunications of the Future” (PE00000001 - program ‘‘RESTART’’, CUP: D93C22000910001).


\end{document}